\theoremstyle{plain}
\newtheorem{theorem}{Theorem}
\newtheorem{lemma}{Lemma}
\newtheorem{definition}{Definition}
\theoremstyle{remark}
\newtheorem{remark}{Remark}
\newtheorem{example}{Example}
\def\E{\mathbb{E}}
\begin{document}

\title[surfaces in a pseudo-sphere with harmonic or 1-type pseudo-spherical Gauss map]{Surfaces in a pseudo-sphere with harmonic or 1-type \\ pseudo-spherical Gauss map}

\author[B. Bekta\c s]{Burcu Bekta\c s}
\address{Istanbul Technical University, Faculty of Science and Letters, Department of Mathematics, 34469, Maslak, 
Istanbul, Turkey} 
\email{bektasbu@itu.edu.tr}

\author[J. Van der Veken]{Joeri Van der Veken}
\address{KU Leuven, Department of Mathematics, Celestijnenlaan 200B -- Box 2400, BE-3001 Leuven, Belgium} 
\email{joeri.vanderveken@wis.kuleuven.be}

\author[L. Vrancken]{Luc Vrancken}
\address{LAMAV, ISTV2 \\Universit\'e de Valenciennes\\ Campus du Mont Houy\\ 59313 Valenciennes Cedex 9\\ France and KU  Leuven \\ Department of Mathematics \\Celestijnenlaan 200B -- Box 2400 \\ BE-3001 Leuven \\ Belgium}  
\email{luc.vrancken@univ-valenciennes.fr}

\thanks{This work is partially supported by the Belgian Interuniversity Attraction Pole P07/18 (Dygest) and was carried out while the first author visited KU Leuven supported 
by The Scientific and Technological Research Council of Turkey (TUBITAK) under grant 1059B141500244.}

\begin{abstract}
We give a complete classification of Riemannian and Lorentzian surfaces of arbitrary codimension in a pseudo-sphere whose pseudo-spherical Gauss maps are of 1-type or, in particular, harmonic. In some cases a concrete global classification is obtained, while in other cases the solutions are described by an explicit system of partial differential equations.
\end{abstract}

\keywords{pseudo-sphere, Gauss map, finite type map, harmonic map.}

\subjclass[2010]{53C40, 53C42, 53C50}

\maketitle


\section{Introduction}
In the late 1970's, B.-Y. Chen introduced the concept of finite type submanifolds in Euclidean space. Since then, finite type theory became an active research field of which the first results and fundamental notions were collected in the book \cite{Chenbook1}. Later, the definition of finite type submanifolds was extended to differentiable maps on Riemannian manifolds, in particular to Gauss maps of Euclidean submanifolds, which are useful tools in their study, see \cite{CP}. The generalization to more general maps attracted the interest of people working in analysis to finite type theory and it is now considered an important field with several open conjectures. For a current state of the art, see the report \cite{C2} and the very recent second edition of Chen's above mentioned book \cite{Chenbook2}.

A smooth map $\phi:M\longrightarrow\mathbb{E}^m$ from a Riemannian manifold $M$ into a Euclidean space $\mathbb{E}^m$ is said to be \emph{of finite type} if it has a finite spectral decomposition, i.e., if it can be written as
\begin{equation*}
\phi=\phi_1+\phi_2+\ldots+\phi_k,
\end{equation*}
where each $\phi_i$ satisfies $\Delta\phi_i=\lambda_i\phi_i$ for a constant $\lambda_i\in\mathbb{R}$, where $\Delta$ is the Laplacian of $M$ acting on each component of $\phi_i$. If $\lambda_1, \lambda_2, \dots, \lambda_k$ are all distinct, $\phi$ is said to be \emph{of $k$-type}. 

Let $\textbf{x}:M \longrightarrow\mathbb{E}^m$ be an isometric immersion of an oriented $n$-dimensional Riemannian manifold $M$ into a Euclidean space $\mathbb{E}^m$. Let $G(n,m)$ denote the Grassmannian manifold consisting of all oriented $n$-planes through the origin  of $\mathbb{E}^m$. The classical Gauss map $\nu: M\longrightarrow G(n,m)$ is a smooth map which carries each point $p\in M$ to the oriented $n$-plane in $\mathbb E^m$ obtained by parallel translation of the tangent space to $\mathbf{x}(M)$ at $\mathbf{x}(p)$ in $\mathbb{E}^m$ to the origin. Note that since $G(n,m)$ is canonically embedded into a Euclidean space $\mathbb{E}^N$, where $N= {m\choose n}$, the notion of finite type map can be defined for the classical Gauss map. The classical Gauss map associated with a pseudo-Riemannian submanifold of a pseudo-Euclidean space was given in a similar way in \cite{Kim}. 

Chen and Piccinni gave a characterization theorem for submanifolds of $\mathbb{E}^m$ with $1$-type Gauss map in \cite{CP}, a result which is closely related to the well-known characterization of parallel mean curvature submanfolds of $\mathbb{E}^m$ by the harmonicity of their Gauss map $\nu: M\longrightarrow G(n,m)$ by Ruh and Vilms given in \cite{RV}. 

An isometric immersion of an $n$-dimensional Riemannian manifold $M$ into a sphere $\mathbb S^{m}$ can also be seen as an isometric immersion into a Euclidean space $\mathbb{E}^{m+1}$, and therefore the Gauss map associated with such an immersion can be determined in the classical sense as above. On the other hand, Obata modified the definition of the Gauss map to better capture the properties of the immersion into the sphere, rather than into the Euclidean space, in \cite{O} as follows. Let $\textbf{x}: M \longrightarrow\widetilde M$ be an isometric immersion from an $n$-dimensional Riemannian manifold $M$ into an $m$-dimensional real space form $\widetilde M$. The generalized Gauss map in Obata's sense is a map which assigns to each $p\in M$ the totally geodesic $n$-dimensional submanifold of $\widetilde M$ tangent to $\textbf{x}(M)$ at $\textbf{x}(p)$. In the case $\widetilde M = \mathbb  S^{m}$, Obata's map carries $p\in M$ to a totally geodesic $n$-sphere of $\mathbb{S}^m$ and this is of course uniquely determined by a linear $(n+1)$-dimensional subspace of $\mathbb{E}^{m+1}$, by intersecting this subspace with $\mathbb S^m$. Hence, this map can be seen as a map from $M$ to $G(n+1,m+1)$, called the \emph{spherical Gauss map}. 

For Riemannian submanifolds of a sphere with 1-type or harmonic spherical Gauss map, Chen and Lue gave a characterization theorem in \cite{CL}.

In \cite{Beu}, the first author, Canfes and Dursun introduced the notion of \emph{pseudo-spherical Gauss map} associated with an immersion of a (pseudo-)Riemannian manifold into a pseudo-sphere and also obtained some characterization and classification theorems. Note however that, in \cite{Ish}, Ishihara studied the Gauss map in a generalized sense for (pseudo-)Riemannian submanifolds of (pseudo-) Riemannian manifolds, also extending the Gauss map in Obata's sense to the pseudo-Riemannian setting. Some of the classification results in \cite{Beu} deal with Lorentzian surfaces in $\mathbb S^4_1$ and $\mathbb S^4_2$ with harmonic pseudo-spherical Gauss map and with Riemannian and Lorentzian surfaces in $\mathbb S^4_1$ with $1$-type pseudo-spherical Gauss map. In this paper we unify and extend these results to arbitrary codimension and index, i.e., we completely classify all Riemannian and Lorentzian surfaces in $\mathbb S^m_s$ with 1-type Gauss map and in particular with harmonic Gauss map. The main theorems are Theorem 3, 4, 5 and 6.

\section{Preliminaries}

\subsection{Pseudo-Riemannian real space forms}

Let $\mathbb{E}^m_s$ denote the \emph{pseudo-Euclidean space} of dimension $m$ and index $s$, i.e., $\mathbb R^m = \{ (x_1,\ldots,x_m) \ | \ x_1,\ldots,x_m \in \mathbb R \}$ equipped with the metric
\begin{equation} 
\label{metric}
ds^2 = \sum_{i=1}^{m-s} dx_i^2-\sum_{j=m-s+1}^{m} dx_j^2.
\end{equation}
Then $\mathbb{E}^m_s$ has constant sectional curvature $c=0$. If $\langle\cdot,\cdot\rangle$ is the inner product associated with $ds^2$, we define for any real number $c \neq 0$ 
\begin{align*} 
& \mathbb{S}^m_s(c) = \left\{x\in\mathbb{E}_s^{m+1} \ |\  \langle x,x \rangle = c^{-1}\right\} 
\mbox{ if } c>0, \\
& \mathbb{H}^m_s(c) = \left\{x\in\mathbb{E}_{s+1}^{m+1} \ |\ \langle x,x \rangle =c^{-1}\right\}
\mbox{ if } c<0. 
\end{align*}
When equipped with the induced metric from $ds^2$, these manifolds have constant sectional curvature $c$ and are called \emph{pseudo-sphere}, respectively \emph{pseudo-hyperbolic space}, of dimension $m$ and index $s$. A vector $v$ tangent to one of these spaces is called \emph{spacelike} if $\langle v, v\rangle>0$ or $v=0$, \emph{timelike} if $\langle v,v\rangle<0$, and \emph{lightlike} (or \emph{null}) if $\langle v, v\rangle=0$ and $v\neq 0$. This terminology is inspired by general relativity, where $\mathbb{E}^m_1$, $\mathbb{S}^{m-1}_1(c)$ and $\mathbb{H}^{m-1}_1(c)$ are known as the Minkowski, de Sitter, and anti-de Sitter spaces, respectively.

\subsection{Basics of submanifold theory}

Let $M$ be an $n$-dimensional (pseudo-)Riemannian submanifold of a (pseudo-)Riemannian manifold $\widetilde M$. We denote the Levi-Civita connections of $\widetilde M$ and $M$ by $\widetilde{\nabla}$ and $\nabla$ respectively. Then the Gauss and Weingarten formulas are given respectively by 
\begin{align*}
& \widetilde{\nabla}_{X}Y = \nabla_{X}Y + h(X,Y), \\
& \widetilde{\nabla}_{X}\xi =-A_{\xi}X+ D_{X}\xi
\end{align*}
for any vector fields $X$ and $Y$ tangent to $M$ and any vector field $\xi$ normal to $M$. Here, $h$ is the second fundamental form, a symmetric tensor field taking values in the normal bundle, $A_{\xi}$ stands for the shape operator with respect to the normal direction $\xi$ and $D$ is a connection in the normal bundle. The shape operators and the second fundamental form are related by 
\begin{equation}
\label{eq:1}
\langle A_\xi X,Y \rangle=\langle h(X,Y), \xi\rangle
\end{equation}
for any $X$ and $Y$ tangent to $M$ and any $\xi$ normal to $M$, where $\langle \cdot,\cdot \rangle$ denotes the metric both on $M$ and on $\widetilde M$. The mean curvature vector field is defined as 
$$H=\frac{1}{n} \, \mbox{tr}\,{h}.$$

If the ambient manifold $\widetilde M$ has constant sectional curvature $c$, the equations of Gauss, Codazzi and Ricci are given respectively by 
\begin{align}
\label{Gausseq}
& \langle R(X,Y)Z,W\rangle=
c(\langle X,W\rangle\langle Y,Z\rangle \! - \! \langle X,Z\rangle\langle Y,W\rangle) \!+\! \langle h(X,W), h(Y,Z)\rangle \!-\! \langle h(Y,W), h(X,Z)\rangle,\\
\label{Codazzieq}
& (\overline{\nabla} h)(X,Y,Z)=(\overline{\nabla} h)(Y,X,Z),\\
\label{Riccieq}
& \langle R^D(X,Y)\xi,\eta\rangle=\langle[A_\xi,A_\eta]X,Y\rangle
\end{align}
for any $X$, $Y$, $Z$ and $W$ tangent to $M$ and any $\xi$ and $
\eta$ normal to $M$. Here, $R$ is the curvature tensor of $M$, $R^D$ is the curvature tensor associated with $D$ and the covariant derivative $\overline{\nabla} h$ is defined by 
\begin{align}
(\overline{\nabla} h)(X,Y,Z)&=D_Xh(Y,Z)-h(\nabla_X Y,Z)-h(Y,\nabla_X Z)
\end{align}
for any $X$, $Y$ and $Z$ tangent to $M$. If $\{e_1,\ldots,e_n\}$ is a local frame on $M$ satisfying 
$\langle e_i,e_j \rangle = \varepsilon_i \delta_{ij}$, with $\varepsilon_i \in \{-1,1\}$ for all $i$ and $j$, the scalar curvature of $M$ is given by
$$ S = \sum_{i,j=1}^n \varepsilon_i \varepsilon_j \langle R(e_i,e_j)e_j,e_i \rangle. $$
In particular, if $n=2$, the scalar curvature equals twice the Gaussian curvature $K$ of the surface~$M$. It follows from the equation of Gauss \eqref{Gausseq} that 
\begin{equation} 
\label{scalar-curv-sphere}
S = cn(n-1) + n^2 \langle H,H \rangle - S_h,
\end{equation}
where 
$$ S_h = \sum_{i,j=1}^n \varepsilon_i \varepsilon_j \langle h(e_i,e_j),h(e_i,e_j) \rangle. $$

\subsection{Finite type maps}

Following \cite{C}, we define finite type maps
from a pseudo-Riemannian manifold to a pseudo-sphere or a pseudo-hyperbolic space as follows.
\begin{definition}
\label{finiteteypedef}
A smooth map $\phi:M\longrightarrow\mathbb{S}^{m}_s(c)\subset\mathbb{E}^{m+1}_s$ (resp. $\phi:M\longrightarrow\mathbb{H}^{m}_{s}(c)\subset\mathbb{E}^{m+1}_{s+1}$), from a (pseudo-)Riemannian manifold into a pseudo-sphere (resp. a pseudo-hyperbolic space) is called \emph{of finite type in $\mathbb{S}^{m}_s(c)$ (resp. in $\mathbb{H}^{m}_{s}(c)$)} if it has a finite spectral decomposition
\begin{equation}
\label{specde}
\phi=\phi_1+\phi_2+\cdots+\phi_k, 
\end{equation}
where each $\phi_i$ satisfies $\Delta\phi_i=\lambda_i \phi_i$ for some constant $\lambda_i\in\mathbb{R}$, where $\Delta$ is the Laplacian of $M$ acting on each of the $m+1$ component functions of $\phi_i$. If the spectral decomposition \eqref{specde} contains exactly $k$ terms with different values for $\lambda_i$, then the map $\phi$ is called \emph{of $k$-type}.
\end{definition}

\begin{remark} \label{rem1}
In particular, $\phi:M\longrightarrow\mathbb{S}^{m}_s(c)\subset\mathbb{E}^{m+1}_s$ 
(resp. $\phi:M\longrightarrow\mathbb{H}^{m}_{s}(c)\subset\mathbb{E}^{m+1}_{s+1}$) is of 1-type in 
$\mathbb{S}^{m}_s(c)$ (resp. in $\mathbb{H}^{m}_{s}(c)$) if and only if $\Delta\phi=\lambda \phi$ 
for some $\lambda \in \mathbb R$. As is well-known, if $\lambda = 0$, the map $\phi$ is a harmonic map.
More about harmonic maps can be found in Remark \ref{rem_harm} below.
\end{remark}

\subsection{The pseudo-spherical Gauss map}

Let us first recall some basics about Grassmannian manifolds. If $G(n+1,m+1)$ is the set of all oriented non-degenerate $(n+1)$-dimensional linear subspaces of $\mathbb E^{m+1}_s$, we can construct a natural inclusion of $G(n+1,m+1)$ into $\bigwedge^{n+1}\E^{m+1}_s$ by identifying $L \in G(n+1,m+1)$ with $e_0 \wedge \ldots \wedge e_n$, where $\{e_0,\ldots,e_n\}$ is a positively oriented orthonormal basis for $L$. Furthermore, we can identify $\bigwedge^{n+1}\mathbb E^{m+1}_{s}$ with a pseudo-Euclidean space $\mathbb E^{N}_q$ for some non-negative integer $q$ and $N= {m+1 \choose n+1}$. Here, the pseudo-Euclidean metric is defined as follows: if $\{f_1, f_2, \ldots, f_{m+1}\}$ and $\{g_1, g_2, \ldots, g_{m+1}\}$ are orthonormal bases of $\mathbb E^{m+1}_{s}$, the inner product of $f_{i_1} \wedge \ldots \wedge f_{i_{n+1}}$ and $g_{j_1} \wedge \ldots \wedge g_{j_{n+1}}$ in $\bigwedge^{n+1} \mathbb E^{m+1}_{s}$ is given by
\begin{equation} \label{inner-prod}
\left\langle \left\langle f_{i_1} \wedge \ldots \wedge f_{i_{n+1}},  g_{j_1} \wedge \ldots \wedge g_{j_{n+1}}\right\rangle \right\rangle
= \det( \left\langle f_{i_\ell}, g_{j_k}\right\rangle).
\end{equation} 
In this way, $G(n+1,m+1)$ can be seen as a submanifold of $\mathbb E^{N}_q$. 

Now, let $\mathbf{x}: M^n_t \longrightarrow \mathbb{S}^{m}_s(1)\subset\mathbb{E}^{m+1}_s$ be an isometric immersion from an oriented $n$-dimensional (pseudo-)Riemannian manifold of index $t$ into a pseudo-sphere. As usual, we will locally identify $M^n_t$ with $\mathbf{x}(M^n_t)$. The \emph{pseudo-spherical Gauss map} $\nu$ in the sense of Obata \cite{O}, assigns to each point $p\in M^n_t$ the great pseudo-subsphere $\mathbb{S}^{n}_{t}(1)$ of $\mathbb{S}^{m}_{s}(1)$ tangent to $M^n_t$ at $p$. Since $\mathbb{S}^{n}_{t}(1)$ is uniquely determined as the intersection of $\mathbb{S}^{m}_{s}(1)$ with an $(n+1)$-plane through the center of $\mathbb{S}^{m}_{s}(1)$ in $\mathbb{E}^{m+1}_{s}$, one can look at $\nu$ as a map from $M^n_t$ to $G(n+1,m+1)$. In particular, if $\{e_1,\ldots,e_n\}$ is a positively oriented orthonormal basis of $T_pM^n_t$, then 
$$ \nu(p) = \mathbf{x}(p) \wedge e_1 \wedge \ldots \wedge e_n \in G(n+1,m+1) \subset \mathbb E^N_q. $$

From \eqref{inner-prod} we see $\left\langle  \left\langle \nu,\nu \right\rangle \right\rangle = (-1)^{t}$, 
and hence $\nu$ is a map from $M^n_t$ to $\mathbb S^{N-1}_q(1)$ if $t$ is even and 
a map from $M^n_t$ to $\mathbb H^{N-1}_{q-1}(-1)$ if $t$ is odd. 
It is therefore natural to investigate for which submanifolds $M_t$ of $\mathbb S^m_s(1)$ the pseudo-spherical Gauss map is of finite type in the sense of Definition \ref{finiteteypedef}. The following results were obtained in \cite{Beu}.

\begin{lemma}
\cite{Beu} \label{calc-laplce-lem}
Let $M^n_t$ be an $n$-dimensional oriented (pseudo-)Riemannian submanifold of index~$t$ of $\mathbb{S}^{m}_s(1)$. Then the Laplacian of the pseudo-spherical Gauss map $\nu: M^n_t\rightarrow G(n+1, m+1)\subset\mathbb{E}^N_q$ is given by
\begin{align} \label{deltanu}
\begin{split}
\Delta \nu = &  S_h \nu + n H  \wedge e_{1} 
\wedge  \cdots \wedge e_{n} 
- n \sum_{k=1}^n   {\bf x} \wedge  e_{1} \wedge \cdots \wedge  
\underbrace{D_{e_k} H }_{k-th} \wedge \cdots \wedge e_{n}  \\ 
& +\sum_{j,k=1 \atop j\neq k}^n\sum_{r,s=n+1,\atop r<s}^{m} \varepsilon_r \varepsilon_s R^r_{sjk} \,
{\bf x} \wedge e_{1} \wedge \cdots 
\wedge  \underbrace{e_r}_{j-th}  \wedge \cdots \wedge  \underbrace{e_s}_{k-th} \wedge \cdots \wedge e_{n},
\end{split}
\end{align}
where $\{e_1,\ldots,e_n\}$ is a local positively oriented orthonormal tangent frame to $M^n_t$, $\{e_{n+1},\ldots,e_m\}$ is a local orthonormal normal frame to $M^n_t$ in $\mathbb S^{m}_s(1)$, satisfying $\langle e_r,e_s \rangle = \varepsilon_r \delta_{rs}$, with $\varepsilon_r \in \{-1,1\}$, and $R^r_{sjk}= \langle R^D(e_j, e_k) e_r, e_s \rangle$.
\end{lemma}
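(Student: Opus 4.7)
Fix a point $p \in M^n_t$ and choose the orthonormal tangent frame $\{e_1,\dots,e_n\}$ so that $(\nabla_{e_k}e_j)(p)=0$ for all $j,k$. Since $\nu$ takes values in the flat pseudo-Euclidean space $\bigwedge^{n+1}\mathbb{E}^{m+1}_s\cong\mathbb{E}^N_q$, its Laplacian can be computed componentwise, and at $p$ reduces to
\[
(\Delta\nu)(p)\;=\;-\sum_{k=1}^n \varepsilon_k\, e_k\bigl(e_k(\nu)\bigr)(p),
\]
where $e_k$ acts on the wedge $\nu$ slot by slot via the Leibniz rule. The three ingredients I need are $(\mathrm{i})$ $e_k(\mathbf{x})=e_k$; $(\mathrm{ii})$ $e_k(e_j)(p)=h(e_k,e_j)-\varepsilon_k\delta_{kj}\mathbf{x}$, combining the Gauss formula for $M\subset\mathbb{S}^m_s(1)$ with the umbilicity of $\mathbb{S}^m_s(1)\subset\mathbb{E}^{m+1}_s$; and $(\mathrm{iii})$ $e_k(\xi)=-A_\xi e_k+D_{e_k}\xi$ for every local normal field $\xi$ of $M$ in $\mathbb{S}^m_s(1)$, since such a $\xi$ is tangent to the ambient sphere.

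Applying the Leibniz rule to $\nu=\mathbf{x}\wedge e_1\wedge\cdots\wedge e_n$ and discarding those wedges in which $\mathbf{x}$ or $e_k$ appears twice, I obtain
\[
e_k(\nu)(p)\;=\;\sum_{j=1}^n \mathbf{x}\wedge e_1\wedge\cdots\wedge\underbrace{h(e_k,e_j)}_{j\text{-th}}\wedge\cdots\wedge e_n.
\]
A second differentiation then produces three families of terms. \emph{First}, $e_k(\mathbf{x})=e_k$ inserted in slot $0$ yields $e_k\wedge e_1\wedge\cdots\wedge h(e_k,e_j)\wedge\cdots\wedge e_n$, which vanishes unless $j=k$; in that case a single transposition converts it into $-h(e_k,e_k)\wedge e_1\wedge\cdots\wedge e_n$, and the total contribution after the global $-\varepsilon_k$ is $\sum_k\varepsilon_k h(e_k,e_k)\wedge e_1\wedge\cdots\wedge e_n=nH\wedge e_1\wedge\cdots\wedge e_n$.

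\emph{Second}, for $i\neq j$, identity $(\mathrm{ii})$ with the $-\varepsilon_k\delta_{ki}\mathbf{x}$-part killed by wedge-antisymmetry produces $\mathbf{x}\wedge\cdots\wedge h(e_k,e_i)\wedge\cdots\wedge h(e_k,e_j)\wedge\cdots\wedge e_n$ (in slots $i$ and $j$). Expanding both $h$-vectors in the normal frame $\{e_r\}_{r=n+1}^m$, using
\[
\sum_k\varepsilon_k\langle h(e_k,e_i),e_r\rangle\langle h(e_k,e_j),e_s\rangle=\langle A_{e_s}A_{e_r}e_i,e_j\rangle,
\]
and grouping each pair $(r,s)$ with its swap, the antisymmetric part gives $\langle[A_{e_r},A_{e_s}]e_i,e_j\rangle$, which the Ricci equation \eqref{Riccieq} identifies with $R^r_{sij}$; this delivers exactly the $\sum_{j\neq k}\sum_{r<s}\varepsilon_r\varepsilon_s R^r_{sjk}$-term. \emph{Third}, by $(\mathrm{iii})$, $e_k(h(e_k,e_j))=-A_{h(e_k,e_j)}e_k+D_{e_k}h(e_k,e_j)$ inserted in slot $j$ splits further: the tangent part retains only its $e_j$-component (all others are killed by frame-repetition), contributing $-\varepsilon_j\langle h(e_k,e_j),h(e_k,e_j)\rangle\nu$, whose total sum is $S_h\nu$; the normal part, using the total symmetry of $\overline\nabla h$ (Codazzi \eqref{Codazzieq} combined with symmetry of $h$) together with $\nabla_{e_j}e_k(p)=0$, satisfies $\sum_k\varepsilon_k D_{e_k}h(e_k,e_j)=nD_{e_j}H$, producing the $-n\sum_k \mathbf{x}\wedge\cdots\wedge D_{e_k}H\wedge\cdots$ term.

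The principal obstacle is sign bookkeeping in the antisymmetric wedge product: in particular, pairing the $(r,s)$ and $(s,r)$ contributions in the second family so that \eqref{Riccieq} delivers the stated $r<s$ normalisation, and tracking the single transposition sign in the mean-curvature family. Once these are handled, the four pieces assemble into \eqref{deltanu}.
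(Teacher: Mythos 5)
Your computation is correct, but note that the paper itself offers no proof of Lemma \ref{calc-laplce-lem} to compare against: it is quoted verbatim from \cite{Beu}, so the only check possible is against the statement itself, and your argument passes it. All four families of terms come out with the right signs under the convention $\Delta = -\sum_k \varepsilon_k\bigl(e_k e_k - \nabla_{e_k}e_k\bigr)$, which is indeed the one in force here (it is consistent with the paper's assertion that $\Delta\nu = 2\nu$ for the Clifford torus, where $H=0$, $R^D=0$, $S_h=2$), and your use of the Weingarten identity $\langle A_{h(e_k,e_j)}e_k,e_j\rangle = \langle h(e_k,e_j),h(e_k,e_j)\rangle$, of the total symmetry of $\overline\nabla h$ via \eqref{Codazzieq} to get $\sum_k \varepsilon_k D_{e_k}h(e_k,e_j) = nD_{e_j}H$ at the chosen point, and of \eqref{Riccieq} to convert the antisymmetrized products of shape operators into $R^r_{sjk}$ are exactly the expected steps. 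One small presentational refinement: you label the simplified first derivative as an identity at $p$ only, but to differentiate it again you need it on a neighborhood; it does in fact hold identically, since every discarded wedge (the slot-$0$ term $e_k\wedge e_1\wedge\cdots\wedge e_n$, the $-\langle e_k,e_j\rangle\,\mathbf{x}$ insertions, and the $\nabla_{e_k}e_j$ insertions, whose sole surviving $e_j$-component vanishes by metric compatibility) is zero everywhere, not merely at $p$ — this should be said explicitly, after which the proof is complete.
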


The following results are direct consequences of Remark \ref{rem1}, Lemma \ref{calc-laplce-lem} and 
formula \eqref{scalar-curv-sphere}.

\begin{theorem}
\cite{Beu}
\label{1typesubmanifold}
A (pseudo-)Riemannian submanifold $M$ of $\mathbb{S}^{m}_s(1)$ has 1-type pseudo-spherical Gauss map if and only if $M$ has zero mean curvature vector field and flat normal connection in $\mathbb{S}^{m}_s(1)$, and constant scalar curvature. 
\end{theorem}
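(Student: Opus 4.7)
The plan is to apply the 1-type criterion from Remark \ref{rem1} directly to the explicit expression for $\Delta\nu$ provided by Lemma \ref{calc-laplce-lem}, and then to exploit the orthogonality structure of the decomposable multivectors appearing there. First I would adapt a local frame: at each $p\in M^n_t$ pick a positively oriented orthonormal tangent frame $\{e_1,\ldots,e_n\}$ and an orthonormal normal frame $\{e_{n+1},\ldots,e_m\}$ in $\mathbb{S}^m_s(1)$. Together with $\mathbf{x}(p)$ (which is orthogonal to both in $\mathbb{E}^{m+1}_s$), this gives an orthonormal basis of $\mathbb{E}^{m+1}_s$, and hence, via \eqref{inner-prod}, an orthonormal basis of $\bigwedge^{n+1}\mathbb{E}^{m+1}_s$ consisting of wedge products of $(n+1)$ distinct basis vectors.

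With respect to this basis, the four summands on the right-hand side of \eqref{deltanu} lie in pairwise orthogonal subspaces: the first is a multiple of $\mathbf{x}\wedge e_1\wedge\cdots\wedge e_n=\nu$; the second, $nH\wedge e_1\wedge\cdots\wedge e_n$, expands as a combination of wedges $e_r\wedge e_1\wedge\cdots\wedge e_n$ with $r>n$, which do not involve $\mathbf{x}$; the third involves $\mathbf{x}$ and exactly one normal vector $e_r$ in position $k$ among $n-1$ tangent vectors; the fourth involves $\mathbf{x}$ together with exactly two normal vectors $e_r,e_s$ among $n-2$ tangent vectors. Since the condition $\Delta\nu=\lambda\nu$ must hold with constant $\lambda\in\mathbb{R}$, each of the last three summands must vanish and $S_h$ must equal the constant $\lambda$.

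The vanishing of the second summand forces $\sum_r \varepsilon_r\langle H,e_r\rangle\,e_r\wedge e_1\wedge\cdots\wedge e_n=0$, i.e.\ $H=0$, which in turn makes the third summand vanish automatically. The vanishing of the fourth summand forces $R^r_{sjk}=0$ for all admissible indices, i.e.\ the normal connection $D$ is flat. Finally, setting $c=1$, $n=\dim M$ and $H=0$ in \eqref{scalar-curv-sphere} gives $S_h=n(n-1)-S$, so constancy of $S_h$ is equivalent to constancy of the scalar curvature $S$. This establishes the ``only if'' direction, and each step is manifestly reversible: assuming $H=0$, $R^D=0$ and $S$ constant, formula \eqref{deltanu} reduces to $\Delta\nu=S_h\nu$ with $S_h=n(n-1)-S$ a real constant, which is precisely the 1-type condition in Remark \ref{rem1}.

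The only delicate point is to ensure that the four groups of terms really do sit in mutually orthogonal subspaces of $\bigwedge^{n+1}\mathbb{E}^{m+1}_s$; this is the main (but routine) obstacle, and it is handled by the orthonormal-basis argument above together with the determinantal formula \eqref{inner-prod}. Everything else is a matter of reading off the coefficients and invoking \eqref{scalar-curv-sphere}.
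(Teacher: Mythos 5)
Your proposal is correct and takes essentially the same approach as the paper: the authors present Theorem \ref{1typesubmanifold} as a direct consequence of Remark \ref{rem1}, Lemma \ref{calc-laplce-lem} and formula \eqref{scalar-curv-sphere}, which is precisely the argument you carry out. Your careful decomposition of the four groups of terms in \eqref{deltanu} into mutually orthogonal (indeed, spanned by distinct basis wedges, hence independent) subspaces of $\bigwedge^{n+1}\mathbb{E}^{m+1}_s$ is exactly the routine verification the paper leaves implicit.
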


\begin{theorem} 
\cite{Beu}
\label{prop-1}
An $n$-dimensional (pseudo-)Riemannian submanifold $M$ of $\mathbb{S}^{m}_s(1)$ has harmonic pseudo-spherical Gauss map 
if and only if $M$ has zero mean curvature vector field and flat normal connection in $\mathbb{S}^{m}_s(1)$, 
and constant scalar curvature $S=n(n-1)$.
\end{theorem}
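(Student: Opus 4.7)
The plan is to translate harmonicity $\Delta \nu = 0$ (cf.\ Remark \ref{rem1}) directly into geometric conditions on $M$ via the explicit formula \eqref{deltanu} supplied by Lemma \ref{calc-laplce-lem}.

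First I would expand $H$ and each $D_{e_k}H$ in the normal orthonormal frame $\{e_{n+1},\ldots,e_m\}$, so that every summand on the right-hand side of \eqref{deltanu} becomes a multiple of a decomposable $(n+1)$-vector built from the ordered set $\{\mathbf{x},e_1,\ldots,e_m\}$ in $\mathbb{E}^{m+1}_s$. I would then sort the resulting wedges into four families distinguished by (a) whether $\mathbf{x}$ appears as a factor and (b) how many normal vectors replace tangent ones: the $S_h$-term contributes $\mathbf{x}\wedge e_1\wedge\cdots\wedge e_n$; the $H$-term contributes wedges of type $e_r\wedge e_1\wedge\cdots\wedge e_n$ without $\mathbf{x}$; the $D_{e_k}H$-term contributes wedges containing $\mathbf{x}$ plus one normal and $n-1$ tangent directions; and the curvature term contributes wedges containing $\mathbf{x}$ plus two normal and $n-2$ tangent directions. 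Since these four families live in pairwise disjoint subsets of the standard basis of $\bigwedge^{n+1}\mathbb{E}^{m+1}_s$, they are linearly independent, so $\Delta\nu=0$ is equivalent to each family vanishing separately.

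From this splitting I would read off the three geometric conditions. The vanishing of the second family forces $H=0$, which automatically kills the third family as well. The vanishing of the fourth family gives $R^r_{sjk}=0$ for all admissible indices, i.e.\ flatness of the normal connection. The vanishing of the first family is $S_h=0$; feeding $H=0$, $S_h=0$, and $c=1$ into \eqref{scalar-curv-sphere} then yields $S=n(n-1)$, which is in particular constant. The converse is immediate: the three assumed conditions together with \eqref{scalar-curv-sphere} give $S_h=0$, and then every term of \eqref{deltanu} vanishes, so $\Delta\nu=0$.

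The one step requiring mild care is the linear-independence claim—checking that, even after the antisymmetrisation implicit in the wedge notation and across the $n$ distinct insertion positions of the third family, the decomposable $(n+1)$-vectors remain independent. This is a standard multilinear-algebra verification rather than a serious obstacle, and once it is in place the theorem drops out essentially by reading off coefficients. I also note that, in parallel with Theorem \ref{1typesubmanifold}, the extra scalar-curvature condition $S=n(n-1)$ is exactly what selects the eigenvalue $\lambda=0$ in the 1-type criterion.
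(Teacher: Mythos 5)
Your proposal is correct and takes essentially the same route as the paper, which presents Theorem \ref{prop-1} (quoted from the reference of Bekta\c s--Canfes--Dursun) as a direct consequence of Remark \ref{rem1}, Lemma \ref{calc-laplce-lem} and formula \eqref{scalar-curv-sphere}: one sets $\Delta\nu=0$ in \eqref{deltanu}, separates the linearly independent wedge families to get $S_h=0$, $H=0$ and $R^D=0$, and converts $S_h=0$ into $S=n(n-1)$ via \eqref{scalar-curv-sphere}. Your independence check, including the non-cancellation of the curvature terms under the antisymmetry of $R^D$ in $j,k$, is exactly the verification implicit in the paper's argument.
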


\begin{remark} \label{rem_harm}
In Theorem \ref{prop-1}, harmonicity of $\nu$ is interpreted as the vanishing of $\Delta\nu$ as computed in~\eqref{deltanu}. This means that $\nu$ is harmonic as a map between the (pseudo-)Riemannian manifolds $M$ and~$\mathbb E^N_q$. However, it is also natural to study when $\nu$ is harmonic as a map from $M$ to $G(n+1,m+1)$ (with the induced metric from $\mathbb E^N_q$). This happens if and only if the terms tangent to $G(n+1,m+1)$ in \eqref{deltanu} vanish, i.e., if and only if $M$ has zero mean curvature vector in $\mathbb S^m_s(1)$.
\end{remark}

\section{Classification results for Riemannian surfaces}

Consider a spacelike (i.e. Riemannian) surface with arbitrary codimension in a pseudo-sphere $\mathbb S^m_s(1)$. For the Riemannian case $s=0$, we refer to \cite{CL}. The next two results give a classification of those surfaces with $1$-type pseudo-spherical Gauss map, first for the non-harmonic case, then for the harmonic case. 

\begin{theorem}
\label{1typessurfacearbit}
A spacelike surface $M$ in $\mathbb{S}^{m}_s(1)$ has non-harmonic 1-type pseudo-spherical Gauss map if and only if it is an open part of the Clifford torus lying fully in a totally geodesic 3-sphere 
$\mathbb{S}^3(1)\subset\mathbb{S}^{m}_s(1)$.
\end{theorem}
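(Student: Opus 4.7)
The plan is to use Theorems \ref{1typesubmanifold} and \ref{prop-1} to translate the hypothesis into three geometric conditions on $M$: minimality ($H=0$), flat normal connection in $\mathbb{S}^m_s(1)$, and constant Gaussian curvature $K \neq 1$. Since $M$ is spacelike, every shape operator $A_\xi$ is self-adjoint on a positive-definite inner product space; the Ricci equation~\eqref{Riccieq} together with flatness of $R^D$ gives $[A_\xi,A_\eta]=0$, so the family $\{A_\xi\}$ is simultaneously diagonalisable. Choosing a local orthonormal frame $\{e_1,e_2\}$ that diagonalises every $A_\xi$ and using $H=0$, one obtains a normal vector field $V$ with $h(e_1,e_1) = V$, $h(e_2,e_2) = -V$ and $h(e_1,e_2) = 0$. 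The Gauss equation~\eqref{Gausseq} reduces to $K = 1 - \langle V,V\rangle$, and the condition $K \neq 1$ forces $V \neq 0$ and $\langle V,V\rangle \neq 0$.

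Writing the Levi-Civita connection of $M$ as $\nabla_{e_1}e_1 = \omega\, e_2$ and $\nabla_{e_2}e_1 = \sigma\, e_2$, a direct application of the Codazzi equation~\eqref{Codazzieq} to the triples $(e_1,e_2,e_1)$ and $(e_1,e_2,e_2)$ yields $D_{e_2}V = 2\omega V$ and $D_{e_1}V = -2\sigma V$. Hence the rank-one subbundle $L := \mathrm{span}(V)$ is parallel in the normal bundle of $M$, and $h$ takes values in $L$. By an Erbacher-type reduction theorem, $M$ then lies in a totally geodesic $3$-dimensional submanifold of $\mathbb{S}^m_s(1)$, which is isometric to $\mathbb{S}^3(1)$ or to $\mathbb{S}^3_1(1)$ depending on whether $V$ is spacelike or timelike.

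The main obstacle is ruling out the Lorentzian case. In that case pick a parallel unit timelike section $e_3$ of $L$ and write $V = \mu\, e_3$, so that $K = 1+\mu^2$. Constancy of $K$ gives $\mu$ constant, and the relations $D_{e_i}V \propto V$ combined with $\mu\neq 0$ force $\sigma = \omega = 0$. Therefore the frame $\{e_1,e_2\}$ is parallel on $M$, which implies $K=0$ and contradicts $K = 1+\mu^2 > 1$. So $V$ must be spacelike and $M \subset \mathbb{S}^3(1)$.

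Finally, $M$ is a minimal surface in the round $3$-sphere of constant Gaussian curvature $K \neq 1$. By a classical result going back to Chern and Lawson, such a surface is either totally geodesic (which forces $K=1$, excluded) or an open part of the Clifford torus (with $K=0$); alternatively, one can repeat the argument of the previous paragraph in the spacelike setting to see that the frame is parallel and $\mu^2 = 1$, which identifies $M$ directly. For the converse, the Clifford torus in any totally geodesic $\mathbb{S}^3(1) \subset \mathbb{S}^m_s(1)$ is flat, minimal, and has flat normal connection both in $\mathbb{S}^3(1)$ and in $\mathbb{S}^m_s(1)$, so Theorems \ref{1typesubmanifold} and \ref{prop-1} show that its pseudo-spherical Gauss map is of non-harmonic $1$-type.
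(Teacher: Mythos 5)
Your proposal is correct and takes essentially the same approach as the paper's proof: Theorem \ref{1typesubmanifold} and the Ricci equation yield a simultaneously diagonalizing frame, the Codazzi equation shows $\mathrm{span}(V)$ is parallel with vanishing connection form (hence $K=0$), the Erbacher--Magid reduction places $M$ in a totally geodesic $\mathbb{S}^3(1)$ or $\mathbb{S}^3_1(1)$, the timelike case dies on the contradiction $K=0$ versus $K=1+\mu^2$, and Lawson's rigidity identifies the Clifford torus. The only cosmetic differences are that you rule out the lightlike/zero case at the outset via $K\neq 1$ instead of as a separate case, and you verify the converse through Theorems \ref{1typesubmanifold} and \ref{prop-1} rather than by computing $\Delta\nu=2\nu$ directly.
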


\begin{proof}
Assume that $\mathbf{x}: M \longrightarrow \mathbb{S}^{m}_s(1)$ is an immersion of a spacelike surface with 1-type pseudo-spherical Gauss map. It follows from Theorem \ref{1typesubmanifold} that $M$ has zero mean curvature vector, flat normal connection in $\mathbb{S}^{m}_s(1)$ and constant scalar curvature. The equation of Ricci \eqref{Riccieq} implies that $[A_\xi,A_\eta]=0$ for any normal vector fields $\xi$ and $\eta$. Since $M$ has a positive definite metric, $A_\xi$ and $A_\eta$ are simultaneously diagonalizable. Let us choose $\{e_1,e_2\}$ to be a local orthonormal tangent frame such that $h(e_1,e_2)=0$ and $\{e_3, \dots, e_{m}, e_{m+1}={\bf x}\}$ to be a local orthonormal normal frame along $M$ in $\mathbb{E}^{m+1}_s$. The connection form $\omega_{12}$ on $M$ is defined as usual by $\nabla_Xe_1=\omega_{12}(X)e_2$ for any $X$ tangent to $M$. Since the mean curvature vector field is zero, we have $h(e_1,e_1)=-h(e_2,e_2)$. Putting $\xi=h(e_1,e_1)$, it follows from the Gauss equation \eqref{Gausseq} that the constant Gaussian curvature of $M$ is $K=1-\langle\xi,\xi\rangle$. Hence, $\langle\xi,\xi\rangle$ is constant and there are three possibilities according to the casual character of $\xi$.

\emph{Case (i).} \emph{$\xi \neq 0$ is spacelike.} 
We can choose a unit spacelike vector field $\xi'$ such that $\xi=\lambda\xi' \neq 0$. Since $K=1-\lambda^2$ is constant, $\lambda$ is a nonzero constant. For $X=e_1$ and $Y=Z=e_2$, the Codazzi equation \eqref{Codazzieq} gives $D_{e_1}\xi'=-2\omega_{12}(e_2)\xi'$. Similarly, for $X=Z=e_1$ and $Y=e_2$ we have $D_{e_2}\xi'=2\omega_{12}(e_1)\xi'$. On the other hand, since $\|\xi'\|=1$, $\langle D_{e_i}\xi',\xi'\rangle=0$ for $i=1,2$. Thus, $\omega_{12}(e_1)=\omega_{12}(e_2)=0$, which implies that $K=0$ and the first normal space $\mbox{Im}\,h=\mbox{Span}\{\xi'\}$ is parallel in the normal bundle. From the reduction theorem by Erbacher-Magid \cite{erb,M}, $M$ is contained in a totally geodesic $\mathbb{S}^3(1)$ in $\mathbb{S}^{m}_s(1)$. It follow for example from \cite{La} that a non-totally geodesic maximal surface of $\mathbb{S}^3(1)$ with constant Gaussian curvature is an open part of the Clifford torus.  
 
\emph{Case (ii).} \emph{$\xi$ is timelike.} 
Now choose a unit timelike vector $\xi'$ with $\xi=\lambda\xi' \neq 0$, then $K=1+\lambda^2$. On the other hand, by a similar calculation as in Case \emph{(i)}, it can be shown that $K=0$. This is a contradiction.  

\emph{Case (iii).} \emph{$\xi=0$ or $\xi$ is lightlike.} 
Now $K=1$, that is $S=2$. From Theorem \ref{prop-1}, the pseudo-spherical Gauss map is harmonic, which is a contradiction.

Conversely, suppose that $M$ is an open part the Clifford torus in $\mathbb{S}^3(1)\subset\mathbb{S}^m_s(1)$. Then it is easy to show that the pseudo-spherical Gauss map satisfies $\Delta\nu=2\nu$. Thus, $M$ has 1-type pseudo-spherical Gauss map and is not harmonic.
\end{proof}

\begin{theorem}
Let ${\bf x}: (M,g) \longrightarrow\mathbb{S}^{m}_s(1)\subset\mathbb{E}^{m+1}_s$ be a non-totally geodesic isometric immersion of a spacelike surface $M$ in the pseudo-sphere $\mathbb{S}^{m}_s(1)$. The pseudo-spherical Gauss map of the immersion is harmonic if and only if there exists a local isothermal coordinate system $\{u,v\}$ on $M$ such that $g=\mu^2(du^2+dv^2)$ and ${\bf x}$ is a solution of the system
\begin{align}
\begin{split}
\label{diffeq}
{\bf x}_{uu}&=\frac{\mu_u}{\mu}{\bf x}_u-\frac{\mu_v}{\mu}{\bf x}_v-\mu^2{\bf x}+c,\\
{\bf x}_{uv}&=\frac{\mu_v}{\mu}{\bf x}_u+\frac{\mu_u}{\mu}{\bf x}_v,\\
{\bf x}_{vv}&=-\frac{\mu_u}{\mu}{\bf x}_u+\frac{\mu_v}{\mu}{\bf x}_v-\mu^2{\bf x}-c,
\end{split}
\end{align}
where $\mu$ satisfies the equation $(\ln{\mu})_{uu}+(\ln{\mu})_{vv}=-\mu^2$ and 
$c$ is a fixed lightlike vector in $\mathbb{E}^{m+1}_s$.
\end{theorem}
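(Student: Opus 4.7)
My plan is to invoke Theorem~\ref{prop-1}, which characterizes harmonicity of $\nu$ by the triple of conditions $H=0$, $R^{D}=0$, and $K=1$ (since $n=2$). The reverse implication---from~\eqref{diffeq} back to these three conditions---will be a direct check using~\eqref{scalar-curv-sphere}, so I will concentrate on the forward direction.

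Starting from $H=0$, $R^{D}=0$, and $K=1$: because $M$ is Riemannian, the Ricci equation~\eqref{Riccieq} combined with $R^{D}=0$ gives $[A_{\xi},A_{\eta}]=0$ for all normal $\xi,\eta$, and each $A_{\xi}$ is self-adjoint on the positive-definite $TM$. Hence I can pick a smooth orthonormal frame $\{E_{1},E_{2}\}$ simultaneously diagonalizing all shape operators, so $h(E_{1},E_{2})=0$. Writing $\eta:=h(E_{1},E_{1})$, minimality forces $h(E_{2},E_{2})=-\eta$, and inserting $K=1$ into the Gauss equation~\eqref{Gausseq} yields $\langle\eta,\eta\rangle=0$. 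Thus $\eta$ is a nowhere zero lightlike normal vector (nonzero because $M$ is not totally geodesic) and, from~\eqref{eq:1}, $A_{\eta}=0$.

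The heart of the proof---and the step I expect to be the main obstacle---is to upgrade $\eta$ to a \emph{constant} vector in $\mathbb{E}^{m+1}_{s}$. I would apply the Codazzi equation~\eqref{Codazzieq} to the triples $(E_{2},E_{1},E_{1})$ and $(E_{1},E_{2},E_{2})$. Using $h(E_{1},E_{2})=0$, $h(E_{2},E_{2})=-\eta$ and the fact that $\nabla_{E_{i}}E_{j}$ is orthogonal to $E_{j}$ in the orthonormal frame, both $D_{E_{1}}\eta$ and $D_{E_{2}}\eta$ should come out as scalar multiples of $\eta$. Hence $\mathrm{span}(\eta)$ is a $D$-parallel line subbundle of the normal bundle; solving the resulting first-order ODE for the scaling factor produces $\widetilde{\eta}=\lambda\eta$ with $D\widetilde{\eta}=0$. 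Combining $D\widetilde{\eta}=0$ with $A_{\widetilde{\eta}}=0$ via the Weingarten formula shows that the derivative of $\widetilde{\eta}$ along $M$ in $\mathbb{S}^{m}_{s}(1)$ vanishes; since also $\langle X,\widetilde{\eta}\rangle=0$ for $X\in TM$, the component along the position vector $\mathbf{x}$ of the ambient $\mathbb{E}^{m+1}_{s}$-derivative vanishes as well. Therefore $\widetilde{\eta}$ is a constant lightlike vector $c\in\mathbb{E}^{m+1}_{s}$.

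Finally, I would pick any local isothermal coordinates $(u_{0},v_{0})$ with $g=\mu_{0}^{2}(du_{0}^{2}+dv_{0}^{2})$. Since the first normal space is the fixed line $\mathbb{R}c$, the Hopf-type normal field
\[
\phi_{0}=h(\partial_{u_{0}},\partial_{u_{0}})-i\,h(\partial_{u_{0}},\partial_{v_{0}})
\]
takes the form $\phi_{0}=F(z_{0})\,c$ for a complex function $F$; Codazzi with $H=0$ and $Dc=0$ forces $F$ to be holomorphic. On the open set where $F\neq 0$ the holomorphic change $w=u+iv:=\int\sqrt{F(z_{0})}\,dz_{0}$ preserves isothermality, and in the new coordinates $\phi_{w}=c$, that is $h(\partial_{u},\partial_{u})=c$, $h(\partial_{u},\partial_{v})=0$, $h(\partial_{v},\partial_{v})=-c$. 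Plugging these into
\[
\mathbf{x}_{uu}=\nabla_{\partial_{u}}\partial_{u}+h(\partial_{u},\partial_{u})-\langle\partial_{u},\partial_{u}\rangle\mathbf{x}
\]
(and its $uv$ and $vv$ analogues), together with the standard Christoffel symbols of a conformal metric, produces exactly~\eqref{diffeq}; the auxiliary PDE $(\ln\mu)_{uu}+(\ln\mu)_{vv}=-\mu^{2}$ is the expression of $K=1$ in an isothermal chart. For the converse one reads off from~\eqref{diffeq} that $h(\partial_{u},\partial_{u})=c$, $h(\partial_{u},\partial_{v})=0$, $h(\partial_{v},\partial_{v})=-c$, from which $H=0$, the first normal space equals the $1$-dimensional parallel line $\mathbb{R}c$ (so $R^{D}=0$), and $\langle c,c\rangle=0$ together with~\eqref{scalar-curv-sphere} gives $K=1$; Theorem~\ref{prop-1} then completes the argument.
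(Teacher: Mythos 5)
Your proposal is correct, and it reorganizes the middle of the argument in a genuinely different way from the paper. Both proofs begin identically: Theorem \ref{prop-1} reduces harmonicity to $H=0$, $R^D=0$ and $K=1$; the Ricci equation \eqref{Riccieq} plus positive definiteness of $g$ gives a simultaneously diagonalizing orthonormal frame with $h(e_1,e_2)=0$ and $h(e_1,e_1)=-h(e_2,e_2)=\xi$ lightlike; and Codazzi \eqref{Codazzieq} shows $D\xi$ is proportional to $\xi$. From there the paper proceeds in one stroke: it observes that the proportionality coefficients in $D_{e_1}\xi=2\beta\xi$, $D_{e_2}\xi=2\alpha\xi$ are precisely the connection coefficients of $M$ itself, that $R^D=0$ is exactly the integrability condition $e_1(\alpha)=e_2(\beta)$ for the system $e_1(\ln\mu)=-\beta$, $e_2(\ln\mu)=-\alpha$, and that the resulting $\mu$ does double duty: $\partial_u=\mu e_1$, $\partial_v=\mu e_2$ are the desired isothermal coordinates while simultaneously $\xi=c/\mu^2$ for a constant null vector $c$, so $h(\partial_u,\partial_u)=\mu^2\xi=c$ comes for free. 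You instead separate the two roles: first you rescale $\xi$ to a $D$-parallel section and upgrade it to a constant ambient vector via $A_\xi=0$ and the Weingarten formula (correct, and the same mechanism as in the paper's proof of Theorem \ref{1typelsurfaceinarbit}), and only afterwards normalize arbitrary isothermal coordinates by the Hopf-differential argument $\phi_0=F(z_0)\,c$ with $F$ holomorphic and $w=\int\sqrt{F}\,dz_0$. Two small points you should make explicit: your ``first-order ODE for the scaling factor'' is really a two-variable Pfaffian system whose solvability is the closedness of the connection form $\omega$ of the line bundle $\mathrm{span}(\xi)$, which holds because $R^D(e_1,e_2)\xi=d\omega(e_1,e_2)\,\xi$ and $\xi\neq0$ --- so you are consuming the flatness hypothesis here, just as the paper consumes it to construct $\mu$; and your restriction to the set where $F\neq0$ is harmless because the zeros of the holomorphic $F$ are exactly the points where $h$ vanishes and the statement is local. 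What the paper's route buys is economy (no complexification, and the explicit relation $\xi=c/\mu^2$); what yours buys is conceptual clarity: the constancy of $c$ is isolated as a coordinate-free normal-bundle statement, and the coordinate normalization is the standard holomorphic quadratic differential mechanism, which adapts readily to other normalizations of the $(2,0)$-part of $h$. Your treatment of the converse (reading off $h(\partial_u,\partial_u)=c$, $h(\partial_u,\partial_v)=0$, $h(\partial_v,\partial_v)=-c$ and re-invoking Theorem \ref{prop-1}) is at the same level of detail as the paper's ``straightforward calculation.''
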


\begin{remark}
The equation for $\mu$ in the theorem can be rewritten as $\Delta(\ln\mu)=-1$, where $\Delta$ is the Laplacian of the surface $(M,g)$ or as $\Delta_0(\ln\mu)=-\mu^2$, where $\Delta_0$ is the Euclidean Laplacian in dimension 2. This equation is known as \emph{Liouville's equation} and characterizes the conformal factor of a surface of constant Gaussian curvature 1.
\end{remark}

\begin{proof}
Assume that ${\bf x}: M\longrightarrow\mathbb{S}^{m}_s(1)\subset\mathbb{E}^{m+1}_s$ is a non-totally geodesic isometric immersion of a spacelike surface $M$ in $\mathbb{S}^{m}_s(1)$ with a harmonic pseudo-spherical Gauss map. Theorem \ref{prop-1} implies that $M$ has a zero mean curvature vector and flat normal connection in $\mathbb{S}^{m}_s(1)$ and that $K=1$. As in the proof of Theorem \ref{1typessurfacearbit} we can choose a local orthonormal frame $\{e_1,e_2\}$ on $M$ such that $h(e_1,e_2)=0$ and $h(e_1,e_1)=-h(e_2,e_2)=\xi$ for a lightlike vector $\xi$. Define the functions $\alpha=\omega_{12}(e_1)$ and $\beta=-\omega_{12}(e_2)$. Then, the Levi-Civita connection of $M$ is determined by 
$$ \nabla_{e_1}e_1=\alpha e_2, \qquad \nabla_{e_1}e_2=-\alpha e_1, \qquad \nabla_{e_2}e_1=-\beta e_2, \qquad \nabla_{e_2}e_2=\beta e_1. $$
Computing $K$ from these we obtain 
\begin{equation}
\label{gaussM}
e_1(\beta)+e_2(\alpha)-\alpha^2-\beta^2=1.
\end{equation}
For $X=Z=e_1$ and $Y=e_2$, respectively $X=e_1$ and $Y=Z= e_2$, the Codazzi equation \eqref{Codazzieq} gives 
\begin{equation}
\label{Dtheo4}
D_{e_2}\xi=2\alpha\xi, \qquad D_{e_1}\xi=2\beta\xi.
\end{equation} 
Using \eqref{Dtheo4}, the fact that $R^D=0$ is equivalent to $e_1(\alpha)=e_2(\beta)$, which is precisely the integrability condition for the system
\begin{equation}
\label{intcondalpha}
e_1(\ln\mu)=-\beta, \qquad e_2(\ln\mu)=-\alpha.
\end{equation}
If $\mu$ is a solution to this system, one can easily check that $[\mu e_1,\mu e_2]=0$, i.e., there exist local coordinates $\{u,v\}$ on $M$ with $\partial_u=\mu e_1$ and $\partial_v=\mu e_2$. In these coordinates the metric reduces to $g=\mu^2(du^2+dv^2)$ and equation \eqref{gaussM} is equivalent to $(\ln{\mu})_{uu}+(\ln{\mu})_{vv}=-\mu^2$. 

Denoting by $\xi_u$ the Euclidean derivative of the vector field $\xi$ in the direction of $\partial_u$, the formula of Gauss for the immersion $\mathbb S^m_s(1) \to \mathbb E^{m+1}_s$ and the formula of Weingarten for the immersion $M \to \mathbb S^m_s(1)$ give $\xi_u = -A_{\xi}\partial_u + D_{\partial_u}\xi - \langle \partial_u,\xi \rangle \mathbf x$. Using equation \eqref{eq:1} and the expressions for $h$, we get $A_\xi e_1=0$ and hence also $A_{\xi}\partial_u=0$. Since also $\langle \partial_u,\xi \rangle = 0$, we find from \eqref{Dtheo4} that $\xi_u=2\beta\mu\xi$. Analogously, one can prove that $\xi_v=2\alpha\mu\xi$. Hence, $\xi$ is a solution to the system
$$ \xi_u=2\beta\mu\xi, \qquad \xi_v=2\alpha\mu\xi. $$
Using the fact that $\partial_u(\ln\mu)=-\beta\mu$ and $\partial_v(\ln\mu)=-\alpha\mu$ from \eqref{intcondalpha}, 
we obtain $ \xi=\frac{c}{\mu^2}$ 
where $c$ is a constant null vector. 

With respect to the coordinate vector fields, the Levi-Civita connection and
the second fundamental form of $M$ in $\mathbb{S}^m_s(1)$ are then given by
\begin{align}
\label{connuv}
&\nabla_{\partial_u}{\partial_u}=\frac{1}{\mu}\left(\mu_u\partial_u-\mu_v\partial_v\right), 
&&\nabla_{\partial_v}{\partial_u}=\frac{1}{\mu}\left(\mu_v\partial_u+\mu_u\partial_v\right),\\
&\nabla_{\partial_v}{\partial_v}=\frac{1}{\mu}\left(\mu_v\partial_v-\mu_u\partial_u\right),\qquad
&&h(\partial_u,\partial_u)=-h(\partial_v,\partial_v)=c, \ h(\partial_u,\partial_v)=0.
\end{align}  
From the Gauss formula, both for the immersion $\mathbb S^m_s(1) \to \mathbb E^{m+1}_s$ and the immersion 
$M \to \mathbb S^m_s(1)$, we get the system \eqref{diffeq}. 

By a straightforward calculation, the converse is obtained.
\end{proof}

\section{Classification results for Lorentzian surfaces}

In this section, we classify the Lorentzian surfaces in $\mathbb{S}^m_s(1)$ whose pseudo-spherical Gauss map is of $1$-type, first in the non-harmonic case, then in the harmonic case. We will need the following classification of flat Lorentzian surfaces with zero mean curvature vector in $\mathbb S^3_1(1)$ and $\mathbb S^3_2(1)$.

\begin{lemma}
\label{lem:classfm}
A flat Lorentzian surface with zero mean curvature vector in $\mathbb{S}^3_s(1) \subset \mathbb E^4_s$ is congruent to an open part of the image of 
\begin{itemize}
\item[(i)] 
$\mathbf{x}(u,v) = \displaystyle\frac{1}{\sqrt{2}}(\cos u, \sin u, \cosh v, \sinh v)$ if $s=1$;
\item[(ii)]
$\mathbf{x}(u,v) = (\cosh u\cos v, \cosh u\sin v, \sinh u\cos v, \sinh u\sin v)$ if $s=2$.
\end{itemize}
\end{lemma}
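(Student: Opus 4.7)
My plan is to normalize the second fundamental form by a Lorentz rotation of the tangent frame, show that the resulting frame is parallel, introduce flat adapted coordinates on $M$, and then integrate the resulting constant-coefficient Gauss--Weingarten system in $\mathbb E^4_s$. The case distinction $s=1$ vs.\ $s=2$ only enters through the causal character $\epsilon := \langle \xi, \xi\rangle \in \{+1,-1\}$ of a chosen unit normal $\xi$ of $M$ in $\mathbb S^3_s(1)$; since the normal bundle has rank one, $D\xi = 0$ automatically.

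Start from a Lorentzian orthonormal tangent frame $\{e_1, e_2\}$ with $\langle e_1, e_1\rangle = 1$, $\langle e_2, e_2\rangle = -1$. The vanishing of $H$ together with codimension one gives $h(e_1, e_1) = h(e_2, e_2) = a\xi$ and $h(e_1, e_2) = b\xi$, and \eqref{Gausseq} with $c=1$, $K=0$ yields the algebraic identity $a^2 - b^2 = \epsilon$. This lets us parametrize $(a, b)$ as a hyperbolic-trig pair and apply a suitable Lorentz rotation of $\{e_1, e_2\}$ to normalize $(a, b)$ to the canonical constants $(1, 0)$ in case (i) or $(0, 1)$ in case (ii). Substituting these constants together with $D\xi = 0$ into the two independent components of \eqref{Codazzieq} forces both functions $\omega(e_1)$ and $\omega(e_2)$ of the Lorentzian connection form $\omega$ (defined by $\nabla_X e_1 = \omega(X) e_2$, $\nabla_X e_2 = \omega(X) e_1$) to vanish. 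Hence $\{e_1, e_2\}$ is parallel, $[e_1, e_2] = 0$, and there exist local coordinates $(u, v)$ on $M$ with $\partial_u = e_1$, $\partial_v = e_2$, flat Lorentzian metric, and identically vanishing Christoffel symbols.

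In these coordinates the Gauss--Weingarten formulas for the composite immersion $M \hookrightarrow \mathbb S^3_s(1) \hookrightarrow \mathbb E^4_s$ become a linear constant-coefficient PDE system for $(\mathbf{x}, \xi)$: in case (i),
\[ \mathbf{x}_{uu} = \xi - \mathbf{x}, \qquad \mathbf{x}_{vv} = \xi + \mathbf{x}, \qquad \mathbf{x}_{uv} = 0, \]
and in case (ii), after swapping $u \leftrightarrow v$ so that $\partial_u$ is timelike as in the lemma's parametrization,
\[ \mathbf{x}_{uu} = \mathbf{x}, \qquad \mathbf{x}_{vv} = -\mathbf{x}, \qquad \mathbf{x}_{uv} = \xi. \]
A short separation of variables yields
\[ \mathbf{x}(u, v) = A\cos(\sqrt{2}\,u) + B\sin(\sqrt{2}\,u) + C\cosh(\sqrt{2}\,v) + D\sinh(\sqrt{2}\,v) \]
in case (i), and
\[ \mathbf{x}(u, v) = A\cosh u\cos v + B\cosh u\sin v + C\sinh u\cos v + D\sinh u\sin v \]
in case (ii), with $A, B, C, D$ constant vectors in $\mathbb E^4_s$.

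The final step, which is where I expect most of the careful bookkeeping, is to impose $\langle \mathbf{x}, \mathbf{x}\rangle = 1$ together with the induced identities $\langle \mathbf{x}_u, \mathbf{x}_u\rangle = \pm 1$, $\langle \mathbf{x}_v, \mathbf{x}_v\rangle = \mp 1$, $\langle \mathbf{x}_u, \mathbf{x}_v\rangle = 0$, and $\langle \mathbf{x}_u, \xi\rangle = \langle \mathbf{x}_v, \xi\rangle = 0$, with $\xi$ itself determined from $\mathbf{x}$ via the system. A routine expansion in the trigonometric and hyperbolic monomials shows that these identities hold for all $(u, v)$ if and only if $\{A, B, C, D\}$ is a pseudo-orthonormal basis of $\mathbb E^4_s$ with the prescribed signature---in case (i), three spacelike and one timelike vector of squared norms $\pm 1/2$; in case (ii), two spacelike and two timelike unit vectors---where in case (ii) the identity $\langle \mathbf{x}_u, \xi\rangle = 0$ is what decouples $\langle A, D\rangle$ and $\langle B, C\rangle$ from their sum. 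Composing the immersion with the isometry of $\mathbb E^4_s$ that sends $\{A, B, C, D\}$ to the standard pseudo-orthonormal basis, and in case (i) rescaling $\sqrt{2}\,u \mapsto u$, $\sqrt{2}\,v \mapsto v$, then recovers the two parametrizations stated in the lemma.
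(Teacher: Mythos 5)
Your argument is correct, and it takes a genuinely different route from the paper's. The paper treats the two cases separately: for $s=1$ it simply cites Gorokh \cite{gorokh}, while for $s=2$ it works in null coordinates $\langle\partial_u,\partial_u\rangle=\langle\partial_v,\partial_v\rangle=0$, $\langle\partial_u,\partial_v\rangle=-1$, uses Codazzi to show that the function $a$ in $h(\partial_u,\partial_u)=a\xi$, $h(\partial_v,\partial_v)=-\tfrac1a\xi$ is a positive constant, integrates the resulting system into exponential-times-trigonometric solutions with null constant vectors $c_1,\dots,c_4$, and finishes with an isometry plus a null-coordinate reparametrization. You instead give a unified, self-contained treatment of both signatures: starting from an orthonormal frame, you get $a^2-b^2=\epsilon$ from Gauss and then use a pointwise boost of the tangent frame to normalize $(a,b)$ to $(1,0)$ or $(0,1)$ (this normalization replaces the paper's Codazzi argument that $a$ is constant, and absorbs it: in your normalized frame Codazzi instead kills the connection form, yielding a parallel frame and flat coordinates). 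Your constant-coefficient systems then separate directly into the trigonometric/hyperbolic monomials of the target parametrizations, so no final reparametrization is needed, and in particular you do not need the external reference for $s=1$. Both proofs end with the same kind of bookkeeping: imposing the metric identities on the constant vectors to force a pseudo-orthonormal basis, then applying an ambient isometry. The boost step is legitimate: since $a^2-b^2=\epsilon\neq 0$, the relevant component of $(a,b)$ is nonvanishing on a connected neighborhood, so the boost angle is smooth, and sign ambiguities are absorbed by replacing $\xi$ with $-\xi$.

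One detail in your final step is misattributed, though the conclusion stands. In case (ii), the identity $\langle \mathbf{x}_u,\xi\rangle=0$ does \emph{not} decouple $\langle A,D\rangle$ from $\langle B,C\rangle$: with $\xi=\mathbf{x}_{uv}$, both inner products enter $\langle \mathbf{x}_u,\xi\rangle$ with the identical coefficient $\sinh u\cosh u\cos 2v$, so only their sum is constrained there (the same happens in $\langle\mathbf{x},\mathbf{x}\rangle=1$ and $\langle\mathbf{x}_v,\xi\rangle=0$). What actually does the decoupling is the orthogonality identity $\langle \mathbf{x}_u,\mathbf{x}_v\rangle=0$: there $\langle A,D\rangle$ appears with coefficient $\sinh^2 u\cos^2 v-\cosh^2 u\sin^2 v=\tfrac12(\cosh 2u\cos 2v-1)$ and $\langle B,C\rangle$ with coefficient $\cosh^2 u\cos^2 v-\sinh^2 u\sin^2 v=\tfrac12(\cosh 2u\cos 2v+1)$; since these differ by the constant $1$, linear independence of $\{1,\cosh 2u\cos 2v\}$ forces both the sum and the difference, hence each inner product, to vanish. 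So your list of imposed identities is sufficient as stated, but the parenthetical explanation should point to $\langle\mathbf{x}_u,\mathbf{x}_v\rangle=0$ rather than to $\langle\mathbf{x}_u,\xi\rangle=0$.
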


\begin{remark}
The first surface in Lemma \ref{lem:classfm} is a Lorentzian version of the Clifford torus. The second one is the tensor product of the unit circles $c_1(u)=(\cosh u, \sinh u)$ in the Minkowski plane $\mathbb E^2_1$ and $c_2(v)=(\cos v, \sin v)$ in the Euclidean plane $\mathbb E^2$. It appears for example in the classification of flat tensor products of plane curves in \cite{GV}. After a standard identification of $\mathbb E^4_2$ with $\mathbb C^2$, it can also be seen as the complex curve $c(z)=(\cos z,\sin z)$. A direct calculation shows that the pseudo-spherical Gauss map $\nu$ of any of the two immersions satisfies $\Delta\nu=2\nu$, i.e., that it is of 1-type.
\end{remark}

\begin{proof}
For $s=1$, the proof can be found in \cite{gorokh}. 

Now suppose that ${\bf x}: M_1\longrightarrow\mathbb{S}^3_2(1)$ is an isometric immersion of a flat Lorentzian surface with zero mean curvature vector. Choose pseudo-Euclidean coordinates $\{u,v\}$ on $M_1$ such that $\langle\partial_u,\partial_u\rangle=\langle\partial_v,\partial_v\rangle=0$ and $\langle\partial_u,\partial_v\rangle=-1$. Since $H=-h(\partial_u,\partial_v)$, we obtain $h(\partial_u,\partial_v)=0$. Also, using the Gauss equation \eqref{Gausseq}, flatness implies that $\langle h(\partial_u,\partial_u),h(\partial_v,\partial_v)\rangle=1$. Thus, there exist a unit timelike vector field $\xi$ and a function $a$ on $M_1$ with $a\neq 0$ such that $h(\partial_u,\partial_u)=a\xi$ and $h(\partial_v,\partial_v)=-\frac{1}{a}\xi$. By changing the orientation of $\xi$ if necessary, we may assume that $a>0$. For $X=\partial_u$, $Y=Z=\partial_v$ and $X=\partial_v$, $Y=Z=\partial_u$ in the Codazzi equation \eqref{Codazzieq}, we find $a_u=a_v=0$, which means that $a$ is a positive constant. 

From the Gauss formula, both for the immersions $M_1 \longrightarrow \mathbb S^3_2(1)$ and $\mathbb S^3_2(1) \longrightarrow \mathbb E^4_2$, we obtain the following system of differential equations:
\begin{equation}
\label{syspro}
{\bf x}_{uu}=a\xi, \qquad {\bf x}_{uv}={\bf x}, \qquad {\bf x}_{vv}=-\frac{1}{a}\xi.
\end{equation}
Also, by using the Gauss formula for $\mathbb S^3_2(1) \longrightarrow \mathbb E^4_2$ and the Weingarten formula for $M_1 \longrightarrow \mathbb S^3_2(1)$, we get $\xi_u=-a{\bf x}_v$ and $\xi_v=\frac{1}{a}{\bf x}_u$. Using these, one can determine the solution of system \eqref{syspro} straightforwardly to be 
\begin{multline*}{\bf x}(u,v) = e^{\alpha u+\frac{v}{2\alpha}} \left( \cos \left(\alpha u-\frac{v}{2\alpha}\right) c_1 + \sin \left(\alpha u-\frac{v}{2\alpha}\right) c_2 \right) \\ +e^{-\left(\alpha u+\frac{v}{2\alpha}\right)} \left( \cos \left(\alpha u-\frac{v}{2\alpha}\right) c_3 + \sin \left(\alpha u-\frac{v}{2\alpha}\right) c_4 \right)
\end{multline*}
for $\alpha = \sqrt{a/2}$ and some constant vectors $c_1,c_2,c_3,c_4 \in \mathbb{E}^4_2$. The conditions $\langle \mathbf x,\mathbf x \rangle = -\langle \mathbf x_u,\mathbf x_v \rangle = 1$ and $\langle \mathbf x_u,\mathbf x_u \rangle = \langle \mathbf x_v,\mathbf x_v \rangle = 0$ reduce to
\begin{align*}
& \langle c_1,c_1 \rangle = \langle c_2,c_2 \rangle = \langle c_3,c_3 \rangle = \langle c_4,c_4 \rangle = 0, \\
& \langle c_1,c_2 \rangle = \langle c_1,c_4 \rangle = \langle c_2,c_3 \rangle = \langle c_3,c_4 \rangle = 0, \\
& \langle c_1,c_3 \rangle = \langle c_2,c_4 \rangle = \frac{1}{2}.
\end{align*}
After an isometry of $\mathbb S^3_2(1)$, i.e., an isometry of $\mathbb E^4_2$ leaving $\mathbb S^3_2(1)$ globally invariant, we may assume
$$ c_1 = \frac 12 (1,0,1,0), \quad c_2 = \frac 12 (0,1,0,1), \quad c_3 = \frac 12 (1,0,-1,0), \quad c_4 = \frac 12 (0,1,0,-1).$$
The reparametrization $\alpha u + \frac{v}{2\alpha} \mapsto u$, $\alpha u -\frac{v}{2\alpha} \mapsto v$ now gives the result.
\end{proof}

\begin{theorem}
\label{1typelsurfaceinarbit}
A Lorentzian surface in $\mathbb{S}^{m}_s(1)$ has non-harmonic 1-type pseudo-spherical Gauss map if and only if it is congruent to an open part of the image of
$$ \mathbf{x}(u,v) = \displaystyle\frac{1}{\sqrt{2}}(\cos u, \sin u, \cosh v, \sinh v) $$
in a totally geodesic $\mathbb{S}^3_1(1)\subset\mathbb{S}^{m}_{s}(1)$, or of 
$$ \mathbf{x}(u,v) = (\cosh u\cos v, \cosh u\sin v, \sinh u\cos v, \sinh u\sin v) $$
in a totally geodesic $\mathbb S^3_2(1)\subset\mathbb{S}^m_s(1)$.
\end{theorem}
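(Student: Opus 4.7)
By Theorem \ref{1typesubmanifold} the 1-type assumption gives $M$ a vanishing mean curvature vector in $\mathbb{S}^{m}_{s}(1)$, flat normal connection, and constant scalar curvature, and by Theorem \ref{prop-1} non-harmonicity translates into $S\neq 2$, equivalently $K\neq 1$. The plan is to mimic the structure of the proof of Theorem \ref{1typessurfacearbit}, but to replace orthonormal frames by null coordinates, since in the Lorentzian setting commuting traceless self-adjoint operators need not be simultaneously diagonalizable.

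First I would choose local null coordinates $\{u,v\}$ on $M$ with $\langle \partial_u,\partial_u\rangle=\langle \partial_v,\partial_v\rangle=0$ and $\langle \partial_u,\partial_v\rangle$ nowhere vanishing. The condition $H=0$ then reads $h(\partial_u,\partial_v)=0$, so the first normal space $\mathcal{N}_1$ is spanned by $\xi_1=h(\partial_u,\partial_u)$ and $\xi_2=h(\partial_v,\partial_v)$, and the Gauss equation \eqref{Gausseq} gives that $\langle \xi_1,\xi_2\rangle$ is proportional to $1-K$, hence a nonzero constant. Next, computing the shape operator $A_\eta$ for an arbitrary normal $\eta$ in the basis $\{\partial_u,\partial_v\}$ shows that $[A_\eta,A_\zeta]$ is a scalar multiple of $\langle \eta,\xi_1\rangle\langle \zeta,\xi_2\rangle-\langle \eta,\xi_2\rangle\langle \zeta,\xi_1\rangle$; the Ricci equation \eqref{Riccieq}, combined with the non-degeneracy of the induced metric on $T^{\perp}M$ and $\langle \xi_1,\xi_2\rangle\neq 0$, then forces $\xi_1\parallel\xi_2$. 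Thus $\mathcal{N}_1$ is a one-dimensional non-degenerate line spanned by a unit vector $\xi$ that is either spacelike or timelike.

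I would then apply the Codazzi equation \eqref{Codazzieq} to the triples $(\partial_u,\partial_v,\partial_v)$ and $(\partial_v,\partial_u,\partial_u)$ to obtain both the parallelism of $\xi$ in $T^{\perp}M$ and a separation of variables of the form $h(\partial_u,\partial_u)=a(u)\,\xi$ and $h(\partial_v,\partial_v)=b(v)\,\xi$. The Erbacher--Magid reduction theorem \cite{erb,M} then places $M$ in a totally geodesic $\mathbb{S}^3_1(1)$ when $\xi$ is spacelike and in a totally geodesic $\mathbb{S}^3_2(1)$ when $\xi$ is timelike. Feeding the separation of variables into the Gauss equation together with the constancy of $K$ forces the conformal factor of the metric to factor as a function of $u$ times a function of $v$, which in turn gives $K=0$.

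At this stage $M$ is a flat, maximal Lorentzian surface in $\mathbb{S}^3_{s'}(1)$, and Lemma \ref{lem:classfm} supplies the two parametrizations displayed in the theorem. The converse direction has already been observed in the remark following Lemma \ref{lem:classfm}, where it is noted that $\Delta\nu=2\nu$ for each of the two model immersions. The hard part will be the reduction of codimension step: in the Lorentzian signature the shape operators may have non-diagonalizable canonical forms, so the orthonormal-frame method from the Riemannian proof does not apply directly; the null-coordinate approach proposed above handles this by recasting the commutativity condition as a clean bilinear identity in $\xi_1,\xi_2$, but one must still verify carefully that the line $\mathcal{N}_1$ cannot be lightlike, which here follows automatically from $\langle \xi_1,\xi_2\rangle\neq 0$.
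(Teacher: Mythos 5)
Your proposal is correct, and it follows the same overall skeleton as the paper's proof: the Ricci equation combined with $h(e_1,e_2)=0$ forces $\xi_1=h(e_1,e_1)$ and $\xi_2=h(e_2,e_2)$ to be proportional (the rank-one argument you sketch is exactly the paper's computation \eqref{flatnorlor}), the identity $\langle\xi_1,\xi_1\rangle\langle\xi_2,\xi_2\rangle=\langle\xi_1,\xi_2\rangle^2>0$ rules out a lightlike first normal space just as you note, Codazzi yields parallelism of $\mbox{Im}\,h$, Erbacher--Magid reduces the codimension to $\mathbb{S}^3_1(1)$ or $\mathbb{S}^3_2(1)$ according to the causal character of $\xi$, flatness is established, and Lemma \ref{lem:classfm} plus the remark after it (giving $\Delta\nu=2\nu$ for both models) finish both directions. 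Where you genuinely diverge is the flatness step: the paper works with a null \emph{frame} $\{e_1,e_2\}$ with connection functions $\alpha,\beta$, derives $D_{e_1}\xi_2=-2\alpha\xi_2$ and $D_{e_2}\xi_1=2\beta\xi_1$ from Codazzi, hence $e_1(\ln a)=2\alpha$, $e_2(\ln a)=2\beta$, and obtains $K=0$ as the \emph{integrability condition} of this system; you instead use null \emph{coordinates}, where $\nabla_{\partial_u}\partial_v=0$ and $h(\partial_u,\partial_v)=0$ make Codazzi give $D_{\partial_u}h(\partial_v,\partial_v)=D_{\partial_v}h(\partial_u,\partial_u)=0$ outright, whence $a=a(u)$, $b=b(v)$ and $D\xi=0$, after which the Gauss equation with $K$ constant gives $\varepsilon\,a(u)b(v)=(1-K)F^2$ with $F^2=\langle\partial_u,\partial_v\rangle^2$, so $\ln F$ separates additively and the curvature formula $K\propto\frac{1}{F}\,\partial_u\partial_v\ln F$ gives $K=0$. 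Both mechanisms are valid; yours is slightly slicker on that step, while the paper postpones coordinates until Lemma \ref{lem:classfm}. One small slip you should fix: in your coordinate setup the Gauss equation gives $\langle\xi_1,\xi_2\rangle=(1-K)F^2$, which is \emph{not} constant (constancy would already presuppose flatness of the conformal factor); it is merely nowhere zero, which is all your proportionality and non-degeneracy arguments actually need, and your later separation-of-variables step does implicitly use the correct relation including the factor $F^2$.
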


\begin{proof}
Assume that $M_1$ is a Lorentzian surface in $\mathbb{S}^{m}_s(1)$ with non-harmonic 1-type pseudo-spherical Gauss map. Choose $\{e_1, e_2\}$ to be a local tangent frame satisfying $\langle e_1,e_1\rangle=\langle e_2,e_2\rangle=0$ and $\langle e_1, e_2\rangle=-1$. Then, there exists functions $\alpha$ and $\beta$ on $M_1$ such that
\begin{align}
\label{connM1}
\nabla_{e_1}e_1=\alpha e_1, \quad \nabla_{e_1}e_2=-\alpha e_2, \quad 
\nabla_{e_2}e_1=\beta e_1, \quad \nabla_{e_2}e_2=-\beta e_2.
\end{align}
From Theorem \ref{1typesubmanifold}, $M_1$ has zero mean curvature vector and flat normal connection in $\mathbb{S}^{m}_s(1)$, and constant scalar curvature. Since $H=-h(e_1,e_2)$, we find $h(e_1,e_2)=0$. Also, from the Gauss equation \eqref{Gausseq} the constant Gaussian curvature of $M_1$ is given by $K=1-\langle h(e_1,e_1),h(e_2,e_2) \rangle$. This implies that $\langle h(e_1,e_1),h(e_2,e_2) \rangle = c$ is constant and since the pseudo-spherical Gauss map is not harmonic, we know from Theorem \ref{prop-1} that $c \neq 0$. Put $h(e_1,e_1)=\xi_1$ and $h(e_2,e_2)=\xi_2$. Since $M_1$ has flat normal connection, from the Ricci equation \eqref{Riccieq} we get that 
\begin{equation}
\label{flatnorlor}
\langle\xi_1,\xi\rangle\langle\xi_2,\mu\rangle=\langle\xi_1,\eta\rangle\langle\xi_2,\xi\rangle
\end{equation}
for all normal vector fields $\xi$ and $\eta$, which implies that $\xi_1$ is parallel to $\xi_2$. To see this, let $\{e_3,\dots, e_m\}$ be an orthonormal basis of the normal space of $M_1$ in $\mathbb{S}^m_s(1)$ with $\langle e_i,e_j\rangle=\varepsilon_i\delta_{ij}$, $\varepsilon_i \in \{1,-1\}$. We can express $\xi_1$ and $\xi_2$ in this basis as
$$ \xi_1=\sum_{i=3}^m\varepsilon_i\langle\xi_1,e_i\rangle e_i, \qquad \xi_2=\sum_{i=3}^m\varepsilon_i\langle\xi_2,e_i\rangle e_i. $$ 
For $\xi=e_i$ and $\mu=e_j$ in \eqref{flatnorlor}, we have $\langle\xi_1, e_i\rangle\langle\xi_2, e_j\rangle=\langle\xi_1, e_j\rangle\langle\xi_2, e_i\rangle$ for all $i,j=3, \dots,m$. Thus, the rank of matrix with in its columns the coefficients of $\xi_1$ and $\xi_2$ is less than equal to~$1$, that is, $\xi_1$ and $\xi_2$ are proportional to each other. Also, for $\xi=\xi_1$ and $\eta=\xi_2$ in \eqref{flatnorlor} we get $\langle\xi_1,\xi_1\rangle\langle\xi_2,\xi_2\rangle=c^2>0$, which implies that $\xi_1$ and $\xi_2$ are either both spacelike or both timelike. Hence, there exist a unit normal vector field $\xi'$ and a function $a$ on $M_1$, with $a>0$, such that 
\begin{equation} \label{eqxi1xi2}
\xi_1=a\xi', \qquad \xi_2=\varepsilon\frac{c}{a}\xi',
\end{equation} 
where $\varepsilon=\langle\xi',\xi'\rangle \in \{-1,1\}$. For $X=e_1$ and $Y=Z=e_2$, respectively $X=e_2$ and $Y=Z=e_1$, in the Codazzi equation \eqref{Codazzieq}, we obtain 
\begin{equation} \label{Dxi1xi2}
D_{e_1}\xi_2=-2\alpha\xi_2, \qquad D_{e_2}\xi_1=2\beta\xi_1.
\end{equation} On the other hand, we find from \eqref{eqxi1xi2}
\begin{equation} \label{Dxi1xi2bis}
D_{e_2}\xi_1=e_2(a)\xi'+aD_{e_2}\xi', \qquad
D_{e_1}\xi_2=-\varepsilon \frac{ce_1(a)}{a^2}\xi' + \varepsilon\frac{c}{a}D_{e_1}\xi'.
\end{equation}
Since $\xi'$ is a unit normal vector field, $D_{e_1}\xi'$ and $D_{e_1}\xi'$ are perpendicular to $\xi'$. Thus, by comparing \eqref{Dxi1xi2} and \eqref{Dxi1xi2bis} and using \eqref{eqxi1xi2}, we find $D_{e_1}\xi' = D_{e_2}\xi' =0 $ and 
\begin{equation}
\label{eq1}
e_1(\ln a)=2\alpha, \qquad e_2(\ln a)=2\beta.
\end{equation}
We conclude that $\mbox{Im}\,h=\mbox{Span}\{\xi'\}$ is parallel in the normal bundle and from the reduction theorem by Erbarcher-Magid \cite{erb,M}, $M_1$ is contained in a totally geodesic $\mathbb{S}^3_1(1)$ or $\mathbb{S}^3_2(1)$ of $\mathbb{S}^{m}_s(1)$ depending on whether $\varepsilon=1$ or $\varepsilon = -1$. The compatibility condition for system \eqref{eq1} reads $e_1(\beta)-e_2(\alpha)+2\alpha\beta=0$, which, by \eqref{connM1}, is equivalent to $K=0$. Lemma \ref{lem:classfm} now gives the result.
 
The converse can be straightforwardly verified.
\end{proof}



\begin{theorem}
\label{harmoniclorent}
A non-totally geodesic Lorentzian surface $M_1$ in $\mathbb{S}^{m}_s(1)$ has harmonic pseudo spherical-Gauss map if and only if the immersion $\mathbf{x}: M_1 \longrightarrow \mathbb{S}^{m}_s(1) \subset \mathbb{E}^{m+1}_s$ is given by
\begin{equation}
\mathbf{x}(u,v)=\frac{z(u)}{u+v}-\frac{z'(u)}{2},
\end{equation}
where $z$ is a spacelike curve with constant speed $2$ in the light cone $\{v\in\mathbb{E}^{m+1}_1\ |\ \langle v, v\rangle=0 \}$, satisfying 
$\langle z'', z''\rangle=0$ and $z'''\neq 0$.
\end{theorem}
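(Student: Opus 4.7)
The plan is to work in null coordinates on $M_1$ and integrate the resulting Gauss--Codazzi system explicitly. By Theorem~\ref{prop-1}, the harmonicity of the pseudo-spherical Gauss map is equivalent to $H=0$, flat normal connection and $K=1$ (since $S=n(n-1)=2$ for $n=2$). I would first choose null coordinates $(u,v)$ so that $g=-2F\,du\,dv$ for a positive function $F$; the Gauss equation $K=1$ then becomes Liouville's equation $(\ln F)_{uv}=F$, while $H=0$ gives $h(\partial_u,\partial_v)=0$. Writing $\xi_1=h(\partial_u,\partial_u)$ and $\xi_2=h(\partial_v,\partial_v)$, the Gauss equation also yields $\langle\xi_1,\xi_2\rangle=0$; the same column-rank argument as in the proof of Theorem~\ref{1typelsurfaceinarbit} (from the Ricci equation and flat normal connection) forces $\xi_1$ and $\xi_2$ to be proportional. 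Combined with their orthogonality, either one of them vanishes or both are lightlike; since $M_1$ is non-totally geodesic, I would assume, after possibly swapping $u\leftrightarrow v$, that $\xi_2=0$ and $\xi_1\neq 0$, and the Codazzi equation then reduces to $D_{\partial_v}\xi_1=0$.

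In this setting the Gauss formulas for $M_1\hookrightarrow\mathbb{S}^m_s(1)$ and $\mathbb{S}^m_s(1)\hookrightarrow\mathbb{E}^{m+1}_s$ give the system
\begin{equation*}
\mathbf{x}_{vv}=\frac{F_v}{F}\mathbf{x}_v,\qquad \mathbf{x}_{uv}=F\,\mathbf{x},\qquad \mathbf{x}_{uu}=\frac{F_u}{F}\mathbf{x}_u+\xi_1.
\end{equation*}
The first equation integrates to $\mathbf{x}_v=F\,a(u)$ for some $\mathbb{E}^{m+1}_s$-valued function $a$ depending only on $u$, and substituting into the second yields $\mathbf{x}=(\ln F)_u\,a(u)+a'(u)$. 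The conditions $\langle\mathbf{x},\mathbf{x}\rangle=1$ and $\langle\mathbf{x}_v,\mathbf{x}_v\rangle=0$ translate into $\langle a,a\rangle=0$ and $\langle a',a'\rangle=1$, while $\langle\mathbf{x}_u,\mathbf{x}_u\rangle=0$ becomes $\langle a'',a''\rangle=2(\ln F)_{uu}-((\ln F)_u)^2$.

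To reach the canonical form, I would exploit the admissible null-coordinate changes $u\mapsto f(u)$, $v\mapsto g(v)$. The general solution of Liouville's equation is $F=2f'(u)g'(v)/(f(u)+g(v))^2$, so such a change normalizes $F$ to $2/(u+v)^2$, whence $(\ln F)_u=-2/(u+v)$. Setting $z(u):=-2a(u)$ converts the identity $\mathbf{x}=(\ln F)_u\,a+a'$ into
\begin{equation*}
\mathbf{x}(u,v)=\frac{z(u)}{u+v}-\frac{z'(u)}{2}.
\end{equation*}
The translated conditions on $a$ become $\langle z,z\rangle=0$, $\langle z',z'\rangle=4$ and $\langle z'',z''\rangle=0$; a direct computation of $\mathbf{x}_{uu}+(2/(u+v))\mathbf{x}_u$ shows $\xi_1=-z'''/2$, so the non-totally geodesic hypothesis is equivalent to $z'''\neq 0$. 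The converse is then a direct verification. The main obstacle I expect is the dichotomy step: excluding or absorbing the case where both $\xi_1$ and $\xi_2$ are nonzero and lightlike, since the first normal space is then a degenerate line bundle and the Erbacher--Magid reduction is not directly available; one can handle this by exploiting $D_{\partial_u}\xi_2=0$ together with $\xi_2=\lambda\xi_1$ to rescale the null frame and reduce to $\xi_2=0$.
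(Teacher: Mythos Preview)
Your approach is genuinely different from the paper's: the paper does not integrate anything, but simply invokes Theorem~5.1 of \cite{C5}, where Chen classifies all minimal Lorentzian surfaces of constant Gaussian curvature~$1$ in indefinite space forms, and then observes that the non-totally-geodesic ones with flat normal connection are exactly those in the statement. Your direct integration in null coordinates is more self-contained, and the main branch ($\xi_2=0$) is carried out correctly: the Gauss system, the Liouville normalisation $F=2/(u+v)^2$, the identification $\mathbf{x}=(\ln F)_u\,a+a'$, and the translation to the conditions on $z=-2a$ (including $\xi_1=-z'''/2$) all check.

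The genuine gap is precisely the dichotomy you flag, and your proposed fix does not work. A null coordinate change $\tilde u=f(u)$, $\tilde v=g(v)$ sends $\xi_2=h(\partial_v,\partial_v)$ to $(g')^{-2}\xi_2$; it rescales $\xi_2$ but never annihilates it, so you cannot ``reduce to $\xi_2=0$'' this way when both $\xi_1,\xi_2$ are nonzero lightlike and proportional. Moreover, the Ricci identity $[A_\xi,A_\eta]=0$ is automatically satisfied in that case (all shape operators are simultaneously off-diagonal in the null frame), so flat normal connection does not exclude it. For $s=1$ the problem disappears because the normal bundle of a Lorentzian surface in $\mathbb{S}^m_1(1)$ is positive definite, forcing any lightlike $\xi_i$ to vanish; but for $s\geq 2$ you must either integrate the full system with $\xi_2=\lambda\xi_1\neq 0$ and show it collapses to the stated family (this is essentially what Chen does in \cite{C5}), or supply an independent argument ruling the case out. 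Without that, the ``only if'' direction is incomplete for $s\geq 2$.
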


\begin{proof}
By Theorem \ref{prop-1}, the surface $M_1$ is a minimal surface of constant Gaussian curvature~$1$ with flat normal connection in $\mathbb{S}^{m}_s(1)$. Minimal Lorentzian surfaces with constant Gaussian curvature~$1$ were classified by Chen in Theorem 5.1 of \cite{C5}. The ones which are not totally geodesic but have flat normal connection are exactly those described in the theorem. 
\end{proof}

\begin{example}
We consider a spacelike curve 
\begin{equation*}
z(u)=(\cos(\sqrt{2}u),\sin(\sqrt{2}u), \sinh(\sqrt{2}u), \cosh(\sqrt{2}u))
\end{equation*}
lying the light cone of $\mathbb{E}^4_1$. Then, it can be shown that $\langle z',z'\rangle=4$, $\langle z'', z''\rangle=0$ and $z'''\neq 0$. Thus, from Theorem \ref{harmoniclorent} the Lorentzian surface $M_1$ in $\mathbb S^3_1(1)$ given by the position vector $\mathbf{x}$ has harmonic pseudo-spherical Gauss map.
\end{example}

\end{document}